\pgfplotsset{width=7.5cm,compat=1.9}
\DeclareMathOperator{\SL}{SL}
\DeclareMathOperator{\PGL}{PGL}
\DeclareMathOperator{\pg}{PG}
\DeclareMathOperator{\PGaL}{P\Gamma L}
\DeclareMathOperator{\tr}{Tr}
\DeclareMathOperator{\alt}{A}
\renewcommand{\leq}{\leqslant}
\renewcommand{\geq}{\geqslant}
\newcommand{\F}{\mathbb F}
\newcommand{\B}{\mathcal B}
\renewcommand{\P}{\mathcal P}
\theoremstyle{plain}
\newtheorem{lemma}{Lemma}
\newtheorem{theorem}[lemma]{Theorem}
\theoremstyle{definition}
\numberwithin{equation}{section}
\numberwithin{lemma}{section}
\begin{document}

%opening
\title{Transitive $(q-1)$-fold packings of $\pg_n(q)$}
\author{Daniel R. Hawtin\footnote{Address: Faculty of Mathematics, University of Rijeka. Rijeka 51000. Croatia. Email: \href{mailto:dhawtin@math.uniri.hr}{dhawtin@math.uniri.hr}}}

\maketitle
\begin{abstract}
 A \emph{$t$-fold packing} of a projective space $\pg_n(q)$ is a collection $\P$ of line-spreads such that each line of $\pg_n(q)$ occurs in precisely $t$ spreads in $\P$. A $t$-fold packing $\P$ is \emph{transitive} if a subgroup of $\PGaL_{n+1}(q)$ preserves and acts transitively on $\P$. We give a construction for a transitive $(q-1)$-fold packing of $\pg_n(q)$, where $q=2^k$, for any odd positive integers $n$ and $k$, such that $n\geq 3$. This generalises a construction of Baker from 1976 for the case $q=2$.
\end{abstract}

\section{Introduction}

In 1976, Baker \cite{baker1976partitioning} constructed a partition of the set of lines of the projective geometry $\pg_n(2)$ into spreads. (Note that in this paper by a \emph{spread} we mean a set of lines of a projective space inducing a partition of the point-set of the geometry.) Such a construction is often refered to as a \emph{parallelism} or \emph{packing}. Some applications of Baker's construction include: a description of the Preparata codes \cite{baker1983preparata}, the construction of a family of antipodal distance-regular graphs \cite{de1995family}, and the determination of the chromatic number of the Grassmann graph $J_q(n+1,2)$ (see \cite[Section~3.5.1]{brouwer2012strongly}).

As well as Baker's construction, there are several other results results concerning infinite families of packings in projective spaces. In particular, Denniston \cite{denniston1972some} proved that packings exist in $\pg_3(q)$ for all prime powers $q$. Moreover, Penttila and Williams \cite{penttila1998regular} gave a construction for two inequivalent regular packings of $\pg_3(q)$ for each $q\equiv {2 \pmod 3}$, where a packing is regular if each of its constituent spreads are regular (Desarguesian). In higher dimensions, Beutelspacher \cite{beutelspacher1974parallelisms} showed that packings exist in $\pg_n(q)$ if $n=2^{i+1}-1$ and $i$ is any positive integer. See \cite{johnson2010combinatorics} for a fairly comprehensive survey of packings in projective spaces.

As a generalisation of a packing, we are interested here in the concept of a \emph{$t$-fold packing}, defined to be a collection $\P$ of spreads of $\pg_n(q)$ such that every line is contained in precisely $t$ elements of $\P$. Further, we say that a $t$-fold packing $\P$ is \emph{transitive} if there exists a subgroup of $\PGaL_n(q)$ leaving $\P$ invariant and acting transitively on the spreads of $\P$. The main result of this paper, stated below, is a generalisation of \cite{baker1976partitioning}, giving Baker's construction when $q=2$. 

\begin{theorem}\label{thm:main}
 Let $n$ and $k$ be odd positive integers, with $n\geq 3$, let $q=2^k$, and let $\P$ be as in (\ref{eq:partition}). Then $\P$ is a transitive $(q-1)$-fold packing of $\pg_n(q)$.
\end{theorem}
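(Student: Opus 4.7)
The plan is to work directly from the construction given in (\ref{eq:partition}), which I expect defines $\P$ as the orbit of a base Desarguesian line-spread $\S_0$ under the action of an explicit subgroup $G \leq \PGaL_{n+1}(q)$. The spread $\S_0$ most naturally comes from the identification $\F_q^{n+1} \cong \F_{q^2}^{(n+1)/2}$, which is available precisely because $n$ is odd; its $1$-dimensional $\F_{q^2}$-subspaces descend to a line-partition of $\pg_n(q)$. The group $G$ is likely generated by a Singer-cycle element of $\PGL_{n+1}(q)$ together with a suitable power of the Frobenius $x \mapsto x^2$, with the hypothesis that $k$ is odd being what makes these two ingredients interlock in the right way.

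With this in hand, the proof has three stages. First, verify that $\S_0$ is a spread and that $G$ stabilises $\P$ setwise: the former is a standard subfield argument, the latter is immediate from the definition and additionally gives $G$-transitivity on $\P$. Second, by a double count on incidences (line, spread), the required size is $|\P| = (q^n-1)(q^{n+1}-1)/(q^2-1) \cdot (q-1) / |\S_0| = q^n - 1$, equivalently $|G : G_{\S_0}| = q^n - 1$; once $G$ is made explicit this index can be read off directly from its order and the stabiliser of $\S_0$. Third --- the main step --- show that every line of $\pg_n(q)$ lies in exactly $q-1$ spreads of $\P$.

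For the third step, the natural approach is to fix an arbitrary line $L$ and count pairs $(g, L') \in G \times \S_0$ with $g \cdot L' = L$ in two different ways; orbit-stabiliser then yields
\[
 |\{\S \in \P : L \in \S\}| \cdot |G_{\S_0}| = |\S_0 \cap (G \cdot L)| \cdot |G_L|,
\]
and it suffices to prove that the right-hand side equals $(q-1) \cdot |G_{\S_0}|$ uniformly in $L$. This uniform-multiplicity step is where I expect the bulk of the work to lie: one has to split into cases according to the $G$-orbit of $L$ (for instance, distinguishing lines defined over proper subfields from generic lines), and in each case compute both $|G_L|$ and $|\S_0 \cap (G \cdot L)|$ using the explicit Singer/Frobenius structure of $G$. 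The parity hypotheses on $n$ and $k$ should feed into this step through divisibility relations among $q^{n+1}-1$, $q^2 - 1$ and $q-1$, and through the order of the relevant Frobenius twist; together these should collapse the right-hand count to $(q-1)\cdot |G_{\S_0}|$ for every $L$, completing the proof.
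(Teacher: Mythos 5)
Your proposal is built on a guessed reading of the construction that does not match the paper, and the guess changes the mathematical content of what has to be proved. The set $\P$ in (\ref{eq:partition}) is not defined as the orbit of the subfield ($\F_{q^2}$-linear) spread under a Singer--Frobenius subgroup; it is $\P=\{\B_\alpha\mid\alpha\in\F_{q^n}^\times\}$, where $V=\F_{q^n}\oplus\langle w\rangle$ and $\B_\alpha$ consists of the lines admitting a basis $x+x_0w,\ y+y_0w$ satisfying the explicit equation $(xy^q+x^qy)+(xy_0+yx_0)^{q+1}=\alpha$. As a result, the three things your plan treats as given or routine are exactly where the work lies. First, that each $\B_\alpha$ is a spread at all is nontrivial: the paper checks point by point that every $\langle u\rangle$ with $u\in\F_{q^n}^\times$ and every $\langle u+w\rangle$ lies on a unique line of $\B_\alpha$, using that $x^q+x+1$ has no root in $\F_{q^n}$ (because $\tr(1)=1$ when $nk$ is odd) for the first family, and uniqueness of $(q+1)$-st roots (because $\gcd(q+1,q^n-1)=1$ when $n$ is odd) for the second. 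Second, the transitive group is $\F_{q^n}^\times$ acting on the hyperplane $U$ and fixing $w$, which sends $\B_\alpha$ to $\B_{\alpha\beta^{q+1}}$ and is transitive again because $\gcd(q+1,q^n-1)=1$; it is not a Singer cycle of the whole space together with a Frobenius power. Third, the $(q-1)$-fold property is obtained not by an orbit double count but by a direct computation: the left-hand side of the defining equation is never $0$ on a genuine basis, is invariant under $\SL_2(q)$ changes of basis, and under $x+x_0w\mapsto\lambda(x+x_0w)$ its two terms scale by $\lambda$ and $\lambda^{2}$ respectively, so the $q-1$ choices of $\lambda\in\F_q^\times$ yield $q-1$ distinct nonzero values of $\alpha$; hence each line lies in exactly $q-1$ of the $\B_\alpha$.

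Even taken on its own terms, the proposal has a genuine hole: your ``main step'' --- showing that $|\S_0\cap(G\cdot L)|\cdot|G_L|$ equals $(q-1)\cdot|G_{\S_0}|$ uniformly in $L$ --- \emph{is} the theorem, and the plan neither identifies the line orbits of $G$ nor computes either factor for any of them; the appeal to ``divisibility relations'' is not a proof. There is also an arithmetic slip in the second stage: your displayed expression for $|\P|$ evaluates to $(q-1)(q^n-1)$ rather than $q^n-1$, because the number of lines of $\pg_n(q)$ is $(q^{n+1}-1)(q^n-1)/\bigl((q^2-1)(q-1)\bigr)$ and you dropped the factor $q-1$ from the denominator (the correct conclusion $|\P|=q^n-1$ does hold, and is visible directly from the indexing by $\alpha\in\F_{q^n}^\times$). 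As it stands, the proposal establishes transitivity of a hypothetical orbit construction but does not prove that the actual $\P$ of the paper is a $(q-1)$-fold packing.
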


The only currently known $t$-fold packing of $\pg_n(q)$ that the author is aware of is a $5$-fold packing of $\pg_3(2)$, a description of which can be found in a comment of John Bamberg in a SymOmega blog post\footnote{\href{https://symomega.wordpress.com/2009/09/11/i-want-more-moore-graphs/}{https://symomega.wordpress.com/2009/09/11/i-want-more-moore-graphs/}} (in fact, the current author has taken the terminology ``$t$-fold packing'' from said comment). The $5$-fold packing is given by one of the orbits of $\alt_7\leq \PGL_4(2)$ on the spreads of $\pg_3(2)$.

\section{The construction}

Let $k$ and $n$ be odd positive integers, with $n\geq 3$, and let $q=2^k$. Let $U=\F_{q^n}$, viewed as an $\F_q$-vector space of rank $n$, let $W=\langle w\rangle\cong\F_q$, with $w\notin U$, and let $V=U\oplus W\cong \F_q^{n+1}$. We take $V$ to be the underlying vector space of the projective geometry $\pg_n(q)$. The notation $\F_{q}^\times$ refers to both the set of non-zero elements of the field and the multiplicative group of the field.

Let $\alpha\in \F_{q^n}^\times$. Then we define $\B_\alpha$ to be the set of all those lines $\ell$ of $\pg_n(q)$ such that there exists a basis $x+x_0w,y+y_0w$ for $\ell$, where $x,y\in \F_{q^n}^\times$ and $x_0,y_0\in\F_q$, for which
\begin{equation}\label{eq:main}
  \begin{vmatrix}
   x & x^q \\
   y & y^q
  \end{vmatrix}
  +
  \begin{vmatrix}
   x & x_0 \\
   y & y_0
  \end{vmatrix}^{q+1}
  =\alpha. 
\end{equation}
Finally, we define
\begin{equation}\label{eq:partition}
 \P=\{\B_\alpha\mid \alpha\in\F_{q^n}^\times\}.
\end{equation}
% and 
% \begin{equation}\label{eq:chooser}
%   \begin{vmatrix}
%    x & x^q \\
%    y & y^q
%   \end{vmatrix}
%  \in G\cup\{0\}.
% \end{equation}
Note that the value of left hand side of (\ref{eq:main}) depends on the particular choice of basis for $\ell$. However, the next result shows that (\ref{eq:main}) is invariant under a determinant $1$ change of basis of $\ell$.

\begin{lemma}\label{lem:SL2qInvariant}
 For each $A\in\SL_2(q)$, the equation (\ref{eq:main}) is invariant under the map 
 \[
  \begin{bmatrix}
   x+x_0w \\
   y+y_0w
  \end{bmatrix}
  \mapsto A
  \begin{bmatrix}
   x+x_0w \\
   y+y_0w
  \end{bmatrix},
 \]
 on the basis for $\ell$.
\end{lemma}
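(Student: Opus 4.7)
My plan is to verify invariance by direct computation, isolating the two summands on the left-hand side of (\ref{eq:main}) and observing that each is, in a suitable sense, a $2\times 2$ determinant of a matrix built from the basis vectors, with $A$ acting by left multiplication.

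Write $A=\begin{pmatrix} a & b \\ c & d\end{pmatrix}\in\SL_2(q)$, so $a,b,c,d\in\F_q$ and $ad-bc=1$. Since $w\notin U$, separating the $U$- and $W$-components of $A\begin{pmatrix}x+x_0w\\ y+y_0w\end{pmatrix}$ gives the transformed basis with $x'=ax+by$, $y'=cx+dy$, $x_0'=ax_0+by_0$, $y_0'=cx_0+dy_0$. For the second determinant this is immediate: the matrix $\bigl(\begin{smallmatrix} x & x_0\\ y & y_0\end{smallmatrix}\bigr)$ is sent to $A\bigl(\begin{smallmatrix} x & x_0\\ y & y_0\end{smallmatrix}\bigr)$, so its determinant is multiplied by $\det A=1$; raising to the $(q{+}1)$-th power preserves this.

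The first determinant requires one extra observation: because $a,b,c,d\in\F_q$ they are fixed by the Frobenius $t\mapsto t^q$. Hence $(x')^q=ax^q+by^q$ and $(y')^q=cx^q+dy^q$, so the matrix $\bigl(\begin{smallmatrix} x & x^q\\ y & y^q\end{smallmatrix}\bigr)$ is likewise sent to $A\bigl(\begin{smallmatrix} x & x^q\\ y & y^q\end{smallmatrix}\bigr)$, and its determinant is again multiplied by $\det A=1$. Summing the two contributions shows that the left-hand side of (\ref{eq:main}) is unchanged, completing the proof.

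I do not expect any real obstacle here, since the content of the lemma is essentially multilinearity of the determinant together with the fact that $\F_q$-scalars are Frobenius-fixed; the only care needed is in recognising that applying $A$ to the column vector of basis elements induces simultaneous left multiplication by $A$ on both the matrix of $\F_{q^n}$-coordinates and the matrix of $W$-coordinates. Once that bookkeeping is set up, the verification is a two-line application of $\det(AM)=\det(A)\det(M)$.
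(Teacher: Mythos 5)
Your proof is correct and follows essentially the same route as the paper's: both arguments reduce the claim to the identity $\det(AM)=\det(A)\det(M)$ applied to the two matrices $\bigl(\begin{smallmatrix} x & x^q\\ y & y^q\end{smallmatrix}\bigr)$ and $\bigl(\begin{smallmatrix} x & x_0\\ y & y_0\end{smallmatrix}\bigr)$. You merely make explicit the bookkeeping (that the entries of $A$ lie in $\F_q$ and are hence Frobenius-fixed) that the paper leaves implicit.
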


\begin{proof}
 Since $\det A=1$, we have that
 \[
  \det \left(A
  \begin{bmatrix}
   x & x^q \\
   y & y^q
  \end{bmatrix}\right)
  +
  \det \left(A
  \begin{bmatrix}
   x & x_0 \\
   y & y_0
  \end{bmatrix}\right)^{q+1}
  =
  \begin{vmatrix}
   x & x^q \\
   y & y^q
  \end{vmatrix}
  +
  \begin{vmatrix}
   x & x_0 \\
   y & y_0
  \end{vmatrix}^{q+1},
 \]
 and the result holds.
\end{proof}

The next three lemmas show, for each $\alpha\in\F_{q^n}^\times$, that the set $\B_\alpha$ is in fact a spread, the first being required to prove Lemma~\ref{lem:Ucontained}.

\begin{lemma}\label{lem:roots}
 For any $\alpha,u\in \F_{q^n}^\times$, there exists precisely one $\lambda\in\F_q$ such that $ux^q+ u^qx+\lambda u^{q+1}+\alpha$ has roots for $x$ in $\F_{q^n}$.
\end{lemma}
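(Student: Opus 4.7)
The plan is to reduce the equation to an Artin--Schreier-type form and then use trace considerations. Since $u \in \F_{q^n}^\times$, I substitute $x = uy$ for a new variable $y \in \F_{q^n}$. Computing $ux^q + u^q x = u^{q+1}y^q + u^{q+1}y$, the equation becomes
\[
u^{q+1}\bigl(y^q + y + \lambda\bigr) + \alpha = 0,
\]
and dividing by $u^{q+1}\in\F_{q^n}^\times$ and using that we are in characteristic $2$ gives the equivalent equation
\[
y^q + y = \lambda + \tfrac{\alpha}{u^{q+1}}.
\]
Thus the original polynomial has a root in $\F_{q^n}$ if and only if $\lambda + \alpha/u^{q+1}$ lies in the image of the $\F_q$-linear map $T\colon \F_{q^n}\to\F_{q^n}$, $y\mapsto y^q+y$.

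Next, I would identify $\mathrm{Im}(T)$. The kernel of $T$ is $\{y\in\F_{q^n}:y^q=y\}=\F_q$, so $\mathrm{Im}(T)$ has $\F_q$-codimension $1$ in $\F_{q^n}$. Writing $\tr=\tr_{\F_{q^n}/\F_q}$, one checks that $\tr\circ T = 0$, so $\mathrm{Im}(T)\subseteq \ker(\tr)$; both subspaces have the same $\F_q$-codimension $1$, so $\mathrm{Im}(T)=\ker(\tr)$ (this is the additive Hilbert~90 for the cyclic extension $\F_{q^n}/\F_q$).

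Therefore the existence of a root is equivalent to $\tr(\lambda)+\tr(\alpha/u^{q+1})=0$, i.e.\ to $\tr(\lambda)=\tr(\alpha/u^{q+1})$ (char.\ $2$). For $\lambda\in\F_q$ every Frobenius conjugate $\lambda^{q^i}$ equals $\lambda$, so $\tr(\lambda)=n\lambda$; since $n$ is odd and we are in characteristic $2$, this simplifies to $\tr(\lambda)=\lambda$. Hence the condition forces the \emph{unique} value
\[
\lambda = \tr_{\F_{q^n}/\F_q}\!\bigl(\alpha/u^{q+1}\bigr)\in\F_q,
\]
and this $\lambda$ indeed lies in $\F_q$ and makes the polynomial have a root. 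The main (and really the only) subtlety is invoking the parity of $n$ together with the characteristic so that $\tr|_{\F_q}$ is the identity rather than zero; this is precisely where the hypothesis that $n$ is odd enters.
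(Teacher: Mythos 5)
Your proof is correct, and it begins with the same reduction as the paper: substituting $x=uy$ (equivalently, replacing $x$ by $ux$) and dividing by $u^{q+1}$ to reach the Artin--Schreier equation $y^q+y=\lambda+\alpha u^{-(q+1)}$. Where you diverge is in the key fact used to decide solvability. The paper shows that the image $Y$ of $y\mapsto y^q+y$ is an $\F_q$-hyperplane with $1\notin Y$, deducing the latter from the factorization of $x^q+x+1$ into irreducible quadratics via the \emph{absolute} trace $\F_{q^n}\to\F_2$ (citing Lidl--Niederreiter), and then concludes that the $q$ cosets $Y+\lambda$, $\lambda\in\F_q$, partition $\F_{q^n}$, so exactly one contains $\alpha u^{-(q+1)}$. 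You instead identify the image with $\ker\bigl(\tr_{\F_{q^n}/\F_q}\bigr)$ by additive Hilbert 90 and compute $\tr_{\F_{q^n}/\F_q}(\lambda)=n\lambda=\lambda$ for $\lambda\in\F_q$, which pins down the unique value explicitly as $\lambda=\tr_{\F_{q^n}/\F_q}\bigl(\alpha u^{-(q+1)}\bigr)$. Your route buys three things: it is self-contained (no appeal to the factorization results in the literature), it produces a closed formula for the unique $\lambda$, and it uses only the parity of $n$ rather than of both $n$ and $k$ --- the paper needs $k$ odd here only because it routes the argument through the absolute trace, for which $\tr(1)=nk\bmod 2$.
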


\begin{proof}
 Replacing $x$ by $ux$ and dividing by $u^{q+1}$ we obtain the polynomial
 \[
  x^q+x+\lambda +\alpha u^{-(q+1)}.
 \]
 The polynomial $f(x)=x^q+x$ is a linearised polynomial with set of roots precisely $\F_q$. Thus the image $Y$ of $f$ is a codimension $1$ subspace of $U$ over $\F_q$. Since $n$ and $k$ are both odd, we have that $\tr(1)=1$, where $\tr$ is the absolute trace function $\F_{q^n}\rightarrow\F_2$. Applying \cite[Corollary 3.79 and Theorem 3.80]{lidl1997finite}, it follows that $x^q+x+1$ factors into $q/2$ irreducible polynomials, each of degree $2$, and hence $1\notin Y$. Thus the set $\{Y+\lambda\mid\lambda\in\F_q\}$ of cosets of $Y$ forms a partition of $U$ and $\alpha u^{-(q+1)}$ lies in precisely one such coset. Thus the result holds.
\end{proof}

\begin{lemma}\label{lem:Ucontained}
 If $u,\alpha\in \F_{q^n}^\times$ then $u$ is contained in a unique element of $\B_\alpha$.
\end{lemma}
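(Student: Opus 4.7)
The plan is to fix a basis of any line $\ell$ through $\langle u\rangle$ of the form $(u, v)$ with $v = y + y_0 w$, and analyse~(\ref{eq:main}) under the resulting specialisation $(x, x_0) = (u, 0)$. Using $u \in \F_{q^n}$, $y_0 \in \F_q$ (so that $y_0^{q+1} = y_0^2$), and characteristic $2$, equation~(\ref{eq:main}) reduces to the condition
\[
  uy^q + u^q y + u^{q+1} y_0^2 = \alpha.
\]
A standard $\SL_2(q)$ change-of-basis argument, using Lemma~\ref{lem:SL2qInvariant}, shows that every line $\ell \in \B_\alpha$ containing $u$ admits an admissible basis of this shape (with $y \in \F_{q^n}^\times$) satisfying the displayed equation. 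So the task becomes: classify the pairs $(y, y_0) \in \F_{q^n}^\times \times \F_q$ solving it with $y + y_0 w \notin \langle u\rangle$, modulo the equivalence $(y, y_0) \sim (y + cu, y_0)$ for $c \in \F_q$ (which preserves the spanned line).

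Next I would apply Lemma~\ref{lem:roots} with $\lambda = y_0^2$: there is exactly one $\lambda^* \in \F_q$ for which $uY^q + u^q Y + \lambda^* u^{q+1} + \alpha$ has a root in $\F_{q^n}$. Since $t \mapsto t^2$ is a bijection on $\F_q = \F_{2^k}$, this yields a unique $y_0^* \in \F_q$. For $y_0 = y_0^*$, the set of $y$ solving the displayed equation is a coset of the kernel of the $\F_q$-linear map $y \mapsto uy^q + u^q y$. This kernel equals $\F_q u$ (since $uy^q = u^q y$ is equivalent to $(y/u)^q = y/u$, i.e.\ $y/u \in \F_q$), so the solution set is a coset $y^\dagger + \F_q u$ of size $q$. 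Any two representatives span the same line with $u$, so the coset picks out a single candidate line $\ell$.

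The main (and only) subtlety is to confirm that $\ell$ is a genuine line, i.e.\ that some element of the coset gives an admissible basis. If $y_0^* \neq 0$, then $y + y_0^* w \notin \langle u\rangle$ for every $y$ in the coset, and at most one element (namely $y = 0$, if present) fails the requirement $y \in \F_{q^n}^\times$. If $y_0^* = 0$, I need to rule out the coset coinciding with $\F_q u$; but were $y = cu$ a solution, then $uy^q + u^q y = 2cu^{q+1} = 0$, forcing $\alpha = 0$ and contradicting $\alpha \in \F_{q^n}^\times$. Hence exactly one line through $u$ lies in $\B_\alpha$.
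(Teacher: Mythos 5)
Your proposal is correct and follows essentially the same route as the paper: specialise $(x,x_0)=(u,0)$ to get $uy^q+u^qy+u^{q+1}y_0^2=\alpha$, invoke Lemma~\ref{lem:roots} for the unique $y_0$, and observe that the $q$ solutions in $y$ form a coset of $\F_q u$ and hence span a single line with $u$. The extra care you take (bijectivity of squaring on $\F_{2^k}$, the $\SL_2(q)$ reduction to a basis starting with $u$, and ruling out degenerate representatives) only fills in details the paper leaves implicit.
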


\begin{proof}
 Setting $x=u$ and $x_0=0$ in Equation~\ref{eq:main}, and noting that $y_0^q=y_0$, we consider the following:
 \[
  uy^q+u^qy+u^{q+1}y_0^2=\alpha.
 \]
 By Lemma~\ref{lem:roots}, there is precisely one value of $y_0\in \F_q$ such that this equation has solutions for $y\in\F_{q^n}$. If $(y,y_0)=(v,v_0)$ gives one solution, then $(y,y_0)=(\lambda u+v,v_0)$, for $\lambda\in\F_q^\times$, give the remaining $q-1$. It follows that $\ell=\langle u, v+v_0w\rangle$ is the unique element of $\B_\alpha$ containing $u$.
\end{proof}

\begin{lemma}\label{lem:UwContained}
 Let $\alpha\in \F_{q^n}^\times$ and let $u\in \F_{q^n}$. Then $u+w$ is contained in precisely one element of $\B_\alpha$.
\end{lemma}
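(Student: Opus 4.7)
The plan is to parallel Lemma~\ref{lem:Ucontained} by substituting $x = u, x_0 = 1$ in (\ref{eq:main}) so that the first basis vector is $u + w$. This substitution requires $u \neq 0$; the case $u = 0$ will be handled separately.

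For $u \neq 0$, expanding the two determinants in characteristic $2$ the equation becomes
\[
uy^q + u^q y + (uy_0 + y)^{q+1} = \alpha.
\]
Substituting $z = y + uy_0$ and using $y_0^q = y_0$ to cancel the cross-terms $u^{q+1}y_0$, this simplifies to $z^{q+1} + uz^q + u^q z = \alpha$. Since the Frobenius gives $(z + u)^{q+1} = z^{q+1} + uz^q + u^q z + u^{q+1}$, the equation becomes
\[
(z + u)^{q+1} = \alpha + u^{q+1}.
\]
The key fact is that $\gcd(q+1, q^n - 1) = 1$: since $q$ is even, $q + 1$ is odd, and since $n$ is odd, $q^n \equiv -1 \pmod{q+1}$, so $\gcd(q+1, q^n - 1) = \gcd(q+1, 2) = 1$. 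Hence $s \mapsto s^{q+1}$ is a bijection on $\F_{q^n}$, yielding a unique $z \in \F_{q^n}$ satisfying the equation. One checks $z \neq 0$ (else $\alpha = 0$, contradicting $\alpha \in \F_{q^n}^\times$). Taking $y_0 = 0$ gives $y = z \in \F_{q^n}^\times$, so $\ell = \langle u + w, z \rangle$ is a line in $\B_\alpha$ containing $u + w$.

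For uniqueness (still $u \neq 0$), given any $\ell' \in \B_\alpha$ through $u + w$ with a basis $(e_1, e_2)$ satisfying (\ref{eq:main}), I apply Lemma~\ref{lem:SL2qInvariant} with some $A \in \SL_2(q)$ whose first row expresses $u + w$ as a combination of $e_1, e_2$, obtaining a basis of the form $(u + w, y' + y_0' w)$ (with $y' \in \F_{q^n}^\times$, achievable by a suitable choice of the second row of $A$) still satisfying (\ref{eq:main}); the uniqueness of $z$ then forces $\ell' = \ell$. The case $u = 0$ is handled analogously: any basis $(x + x_0 w, y + y_0 w)$ of a line $\langle w, v \rangle$ through $w$ must have $x, y \in \F_q^\times v$, so the first determinant in (\ref{eq:main}) vanishes (using $\lambda^q = \lambda$), and the equation reduces to $v^{q+1} c^2 = \alpha$ for some $c \in \F_q^\times$; the same bijectivity yields a unique line $\langle w, v_0 \rangle$ with $v_0^{q+1} = \alpha$. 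The main obstacle is arriving at the clean factorisation $(z + u)^{q+1} = \alpha + u^{q+1}$; once in hand, the coprimality fact closes the argument essentially immediately.
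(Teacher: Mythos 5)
Your proof is correct and follows essentially the same route as the paper: substitute $x=u$, $x_0=1$, reduce the equation to $(y+u)^{q+1}=\alpha+u^{q+1}$, and invoke $\gcd(q+1,q^n-1)=1$ to get a unique solution. You are somewhat more careful than the paper's proof, which simply sets $y_0=0$ and does not separate the case $u=0$; your substitution $z=y+uy_0$ and the separate treatment of lines through $\langle w\rangle$ tighten rather than change the argument.
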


\begin{proof}
 Setting $x=u,x_0=1$ and $y_0=0$ in Equation~(\ref{eq:main}) gives
 \[
  uy^q+u^qy+y^{q+1}=\alpha.
 \]
 Since $(u+y)^{q+1}=u^{q+1}+uy^q+u^qy+y^{q+1}$, it follows that $(u+y)^{q+1}=u^{q+1}+\alpha$. The fact that $n$ is odd implies that $q+1$ is coprime to $q^n-1$. It follows that $(q+1)$-st roots are unique in $\F_{q^n}$, and we have that there is a unique $y\in\F_{q^n}^\times$ satisfying the above equation, given by
 \[
  y=\left( u^{q+1}+\alpha\right)^{1/(q+1)}+u.
 \]
 This completes the proof. 
\end{proof}

The following three lemmas demonstrate that $\P$ is a transitive $(q-1)$-fold packing. 

\begin{lemma}\label{lem:nonzero}
 If $x+x_0w,y+y_0w$ is a basis for a line $\ell$ of $\pg_n(q)$ then 
 \[
  \begin{vmatrix}
   x & x^q \\
   y & y^q
  \end{vmatrix}
  +
  \begin{vmatrix}
   x & x_0 \\
   y & y_0
  \end{vmatrix}^{q+1}\neq 0.
 \]
\end{lemma}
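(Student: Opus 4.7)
The plan is to assume that the expression $Q$ on the left-hand side of~(\ref{eq:main}) vanishes and derive a contradiction. First, I would use Lemma~\ref{lem:SL2qInvariant} to reduce to the normal form $y_0=0$. If $y_0\neq 0$ and $x_0\neq 0$, the $\SL_2(q)$-matrix $\bigl(\begin{smallmatrix}1 & 0\\ y_0x_0^{-1} & 1\end{smallmatrix}\bigr)$ adds $y_0x_0^{-1}(x+x_0w)$ to $y+y_0w$, annihilating its $w$-component; if instead $y_0\neq 0$ and $x_0=0$, the swap matrix $\bigl(\begin{smallmatrix}0 & 1\\ 1 & 0\end{smallmatrix}\bigr)$ lies in $\SL_2(q)$ because $-1=1$ in characteristic~$2$. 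A brief check shows that after this transformation the new second basis vector remains nonzero, since otherwise both original vectors would be $\F_q$-multiples of a common element of $V$, violating independence.

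After this normalization $y_0=0$ and the new $y$ is nonzero. Using $x_0^q=x_0$, the expression becomes, in characteristic~$2$,
\[
Q = xy^q + x^qy + y^{q+1}x_0^2.
\]
Dividing by $y^{q+1}$ and setting $z=x/y\in\F_{q^n}$, the hypothesis $Q=0$ is equivalent to the affine equation $z^q+z=x_0^2$.

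Finally, I would invoke the structural fact established inside the proof of Lemma~\ref{lem:roots}: the cosets $\{Y+\lambda : \lambda\in\F_q\}$ of the image $Y$ of the map $f(z)=z^q+z$ partition $U$, which forces $\F_q\cap Y=\{0\}$. Since $x_0^2\in\F_q$, the equation $z^q+z=x_0^2$ has a solution in $\F_{q^n}$ only if $x_0=0$, in which case $z=x/y$ lies in $\ker f=\F_q$. But then the normalized basis has both components in $U$, so $x=zy$ with $z\in\F_q$ contradicts the $\F_q$-linear independence of those components. I do not anticipate a serious obstacle: once Lemma~\ref{lem:SL2qInvariant} reduces us to $y_0=0$, the algebra is short, and the only nontrivial field-arithmetic input has already been isolated inside Lemma~\ref{lem:roots}.
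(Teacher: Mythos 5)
Your proof is correct, but it resolves the key case by a different mechanism than the paper's. Both arguments start by using Lemma~\ref{lem:SL2qInvariant} to normalize the $w$-components of the basis; the paper moves $(x_0,y_0)$ to $(0,0)$ or $(0,1)$ and then splits into two cases, whereas you move it to $(x_0,0)$ and keep a single unified case. More substantively, for a line not contained in $U$ the paper writes the hypothetical vanishing as $(x+y)^{q+1}=y^{q+1}$ and appeals to injectivity of the $(q+1)$-st power map on $\F_{q^n}$ (from $\gcd(q+1,q^n-1)=1$ for $n$ odd), the same multiplicative fact it uses in Lemma~\ref{lem:UwContained}; you instead divide by $y^{q+1}$ and reduce everything to the single Artin--Schreier-type equation $z^q+z=x_0^2$, then invoke the additive fact, implicit in the proof of Lemma~\ref{lem:roots}, that the image $Y$ of $z\mapsto z^q+z$ meets $\F_q$ only in $0$. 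Your route has the advantage of unifying the two cases (when $x_0=0$ the equation forces $z=x/y\in\F_q$ and hence linear dependence, which is exactly the paper's first case) and of reusing machinery already isolated in Lemma~\ref{lem:roots}; the paper's route keeps Lemma~\ref{lem:nonzero} independent of Lemma~\ref{lem:roots} and runs parallel to Lemma~\ref{lem:UwContained}. The steps you leave as brief checks all go through: the two $\SL_2(q)$ matrices you exhibit do annihilate the second $w$-component, the new $y$ is nonzero because an invertible change of basis preserves the basis property, and the coset partition in Lemma~\ref{lem:roots} (equivalently, $1\notin Y$ together with $Y$ being an $\F_q$-subspace of codimension $1$) does yield $Y\cap\F_q=\{0\}$.
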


\begin{proof}
 If  By Lemma~\ref{lem:SL2qInvariant}, we can assume that $x_0=0$ and that $y_0=0$ or $1$. If $y_0=0$ then, since $xy^q+x^qy=0$ if and only if $x$ and $y$ are linearly dependent, the result holds in this case. Suppose $y_0=1$ and that the above inequality is instead an equality. Then we have 
 \[
  xy^q+x^qy+x^{q+1}=0.
 \]
 Since $(x+y)^{q+1}=y^{q+1}+xy^q+x^qy+x^{q+1}$ for all $x,y\in\F_{q^n}$, the above becomes $(x+y)^{q+1}=y^{q+1}$. However, since $n$ is odd, implying that $q+1$ is coprime to $q^n-1$ and $(q+1)$-st roots are unique in $\F_{q^n}$, this implies that $x=0$, which contradicts the assumptions on the basis for $\ell$. Thus the result holds.
\end{proof}

\begin{lemma}\label{lem:qMinus1fold}
 Every line of $\pg_n(q)$ is contained in $q-1$ elements of $\P$.
\end{lemma}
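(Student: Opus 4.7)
The strategy is a double-counting argument combined with the $\SL_2(q)$-invariance of Lemma~\ref{lem:SL2qInvariant}. First, each $\B_\alpha$ is a spread: Lemma~\ref{lem:Ucontained} shows every projective point $\langle u\rangle$ with $u\in U\setminus\{0\}$ lies in a unique line of $\B_\alpha$, and Lemma~\ref{lem:UwContained} handles the remaining points, since every point of $\pg_n(q)$ outside the hyperplane corresponding to $U$ has a representative of the form $u+w$ with $u\in U$. Each spread therefore contains $(q^{n+1}-1)/(q^2-1)$ lines, yielding a total of $(q^n-1)(q^{n+1}-1)/(q^2-1)$ incidences between lines of $\pg_n(q)$ and members of $\P$. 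Dividing by the total number of lines of $\pg_n(q)$, namely $(q^{n+1}-1)(q^n-1)/((q^2-1)(q-1))$, gives an average multiplicity of exactly $q-1$ spreads per line.

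It therefore suffices to show that every line lies in \emph{at most} $q-1$ elements of $\P$. Fix a line $\ell$, and for each ordered $\F_q$-basis $\mathcal{B}$ of $\ell$ let $f(\mathcal{B})\in\F_{q^n}$ denote the value of the left-hand side of (\ref{eq:main}) evaluated on $\mathcal{B}$; by Lemma~\ref{lem:nonzero} this value is nonzero. By the definition of $\B_\alpha$, the line $\ell$ belongs to $\B_\alpha$ precisely when $\alpha$ lies in the image of $f$, so it suffices to bound $|\mathrm{Im}(f)|$ by $q-1$. Now $\GL_2(q)$ acts freely and transitively on the set of ordered $\F_q$-bases of $\ell$, and by Lemma~\ref{lem:SL2qInvariant} the value of $f$ is constant on each $\SL_2(q)$-orbit; hence $|\mathrm{Im}(f)|\leq [\GL_2(q):\SL_2(q)]=q-1$, as required. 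Combined with the averaging argument, every line lies in exactly $q-1$ elements of $\P$.

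The conceptual point underlying the proof is that the invariance group identified in Lemma~\ref{lem:SL2qInvariant} has index exactly $q-1$ in $\GL_2(q)$, precisely matching the target multiplicity. Given that the earlier lemmas have already supplied the spread property and the non-vanishing of (\ref{eq:main}), I do not anticipate any serious obstacle in carrying out the plan; the argument is essentially bookkeeping once the index $[\GL_2(q):\SL_2(q)]=q-1$ is matched against the incidence count.
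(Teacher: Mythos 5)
Your proof is correct, but it reaches the conclusion by a genuinely different route from the paper's. The paper argues entirely locally on the fixed line $\ell$: after using Lemma~\ref{lem:SL2qInvariant} to reduce to the $q-1$ bases $\lambda(x+x_0w),\,y+y_0w$ with $\lambda\in\F_q^\times$ (one per $\SL_2(q)$-coset), it shows these give \emph{pairwise distinct} values of the left-hand side of (\ref{eq:main}), by observing that $\lambda^2+1=(\lambda+1)^2$ in characteristic $2$, so equality of the values for $\lambda$ and for $1$ would force the value on the basis $(\lambda+1)(x+x_0w),\,y+y_0w$ to vanish, contradicting Lemma~\ref{lem:nonzero}; the multiplicity is then exactly $q-1$ with no counting. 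You prove the same upper bound --- at most $q-1$ values, since $f$ is constant on $\SL_2(q)$-cosets and $[\GL_2(q):\SL_2(q)]=q-1$ --- but you obtain the matching lower bound globally, by double counting incidences against the fact that each $\B_\alpha$ is a spread and noting that the average multiplicity is exactly $q-1$. Your version trades the paper's small characteristic-$2$ computation for the spread property (Lemmas~\ref{lem:Ucontained} and~\ref{lem:UwContained}) as an input to this lemma; those lemmas precede this one, so there is no circularity, but the paper's proof of this statement is independent of them and yields the slightly stronger local fact that the $q-1$ coset values are all distinct. One presentational remark: your incidence total $(q^n-1)(q^{n+1}-1)/(q^2-1)$ counts pairs $(\ell,\alpha)$ with $\ell\in\B_\alpha$, i.e.\ it treats $\P$ as indexed by $\F_{q^n}^\times$; this matches the quantity bounded in your upper-bound step and the paper's own reading of the statement, so the argument closes correctly.
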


\begin{proof}
 Let $\ell$ be a line of $\pg_n(q)$ and $x+x_0w, y+y_0w$ be a basis for $\ell$, where $x,y\in \F_{q^n}^\times$ and $x_0,y_0\in\F_q$. By Lemma~\ref{lem:nonzero}, the left hand side of (\ref{eq:main}) is never $0$. By Lemma~\ref{lem:SL2qInvariant}, it suffices for us to show that
 \[
  \begin{vmatrix}
   \lambda x & \lambda x^q \\
   y & y^q
  \end{vmatrix}
  +
  \begin{vmatrix}
   \lambda x & \lambda x_0 \\
   y & y_0
  \end{vmatrix}^{q+1}
  \neq 
  \begin{vmatrix}
   x & x^q \\
   y & y^q
  \end{vmatrix}
  +
  \begin{vmatrix}
   x & x_0 \\
   y & y_0
  \end{vmatrix}^{q+1}
 \] 
 for any $\lambda\in\F_q\setminus\{0,1\}$. Suppose, to the contrary, that equality holds in the previous equation. Then, since $\lambda^2+1=(\lambda+1)^2$, we can rearrange the above to give
 \[
  (\lambda+1)
  \begin{vmatrix}
   x & x^q \\
   y & y^q
  \end{vmatrix}
  +
  (\lambda+1)^2
  \begin{vmatrix}
   x & x_0 \\
   y & y_0
  \end{vmatrix}^{q+1}
  =
  \begin{vmatrix}
   (\lambda+1) x & (\lambda+1) x^q \\
   y & y^q
  \end{vmatrix}
  +
  \begin{vmatrix}
   (\lambda+1) x & (\lambda+1) x_0 \\
   y & y_0
  \end{vmatrix}^{q+1}
  =0.
 \] 
 Since $(\lambda+1)(x+x_0w),y+y_0w$ is also a basis for $\ell$, this contradicts Lemma~\ref{lem:nonzero}, completing the proof.
\end{proof}

In the next lemma we consider the action of the multiplicative group $\F_{q^n}^\times$ on $V$ given by $(x+x_0w)^\beta=\beta x+x_0w$, where $\beta\in\F_{q^n}^\times$, $x\in\F_{q^n}$ and $x_0\in\F_q$.

\begin{lemma}\label{lem:transitive}
 The action of the multiplicative group $\F_{q^n}^\times$ on $U$ induces a transitive action on $\P$.
\end{lemma}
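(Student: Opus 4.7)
The plan is to show directly that each $\beta\in\F_{q^n}^\times$ sends $\B_\alpha$ to $\B_{\beta^{q+1}\alpha}$, and then to observe that the $(q+1)$-st power map on $\F_{q^n}^\times$ is a bijection. Transitivity on $\P$ will follow at once. The only substantive piece of work is tracking how the left-hand side of~(\ref{eq:main}) transforms under the action; I expect this to be a routine multilinearity calculation rather than a genuine obstacle.

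For the first step, take $\ell\in\B_\alpha$ with a basis $x+x_0w,\,y+y_0w$ where $x,y\in\F_{q^n}^\times$ and $x_0,y_0\in\F_q$, satisfying (\ref{eq:main}) with value $\alpha$. Since $\beta$ fixes $W$ pointwise and multiplies $U$ by $\beta$, the image $\ell^\beta$ has basis $\beta x+x_0w,\,\beta y+y_0w$, still of the form required by the definition of $\B_{\alpha'}$ as $\beta x,\beta y\in\F_{q^n}^\times$. Substituting into (\ref{eq:main}), the first column of each determinant acquires a factor of $\beta$, and the Frobenius-twisted second column of the first determinant acquires a further factor of $\beta^q$; thus the first determinant is scaled by $\beta^{q+1}$, and the second determinant is scaled by $\beta$, which becomes $\beta^{q+1}$ after raising to the $(q+1)$-st power. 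Hence the left-hand side evaluated on $\ell^\beta$ equals $\beta^{q+1}\alpha$, giving $\B_\alpha^\beta\subseteq\B_{\beta^{q+1}\alpha}$; the reverse inclusion follows by applying the same argument with $\beta^{-1}$ in place of $\beta$, yielding equality.

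For the second step, the induced action on $\P$ is therefore $\B_\alpha\mapsto\B_{\beta^{q+1}\alpha}$, and transitivity reduces to surjectivity of the map $\beta\mapsto\beta^{q+1}$ on $\F_{q^n}^\times$. As already used in the proof of Lemma~\ref{lem:UwContained}, the oddness of $n$ implies $\gcd(q+1,q^n-1)=1$, so this power map is a bijection of $\F_{q^n}^\times$. Consequently, for any $\alpha_1,\alpha_2\in\F_{q^n}^\times$ there exists $\beta\in\F_{q^n}^\times$ with $\beta^{q+1}=\alpha_2\alpha_1^{-1}$, sending $\B_{\alpha_1}$ to $\B_{\alpha_2}$, which establishes transitivity.
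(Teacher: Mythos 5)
Your proof is correct and follows essentially the same route as the paper: both compute that the basis $\beta x+x_0w,\,\beta y+y_0w$ scales the left-hand side of (\ref{eq:main}) by $\beta^{q+1}$, so $\B_\alpha^\beta=\B_{\beta^{q+1}\alpha}$, and both conclude transitivity from $\gcd(q+1,q^n-1)=1$. Your explicit note on the reverse inclusion via $\beta^{-1}$ is a small tidiness the paper leaves implicit, but the argument is the same.
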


\begin{proof}
 Let $\alpha,\beta\in\F_{q^n}^\times$, let $\ell\in\B_\alpha$ and let $x+x_0w, y+y_0w$ be a basis for $\ell$ such that (\ref{eq:main}) holds, where $x,y\in \F_{q^n}^\times$ and $x_0,y_0\in\F_q$. Under the map $(x,x_0,y,y_0)\mapsto (\beta x,x_0,\beta y,y_0)$ the left hand side of (\ref{eq:main}) becomes
 \[
  \begin{vmatrix}
   \beta x & (\beta x)^q \\
   \beta y & (\beta y)^q
  \end{vmatrix}
  +
  \begin{vmatrix}
   \beta x & x_0 \\
   \beta y & y_0
  \end{vmatrix}^{q+1}
  =
  \beta^{q+1}
  \begin{vmatrix}
   x & x^q \\
   y & y^q
  \end{vmatrix}
  +
  \beta^{q+1}
  \begin{vmatrix}
   x & x_0 \\
   y & y_0
  \end{vmatrix}^{q+1}  
  =\alpha\beta^{q+1}.
 \]
 Hence $\ell^\beta$ is in $\B_\gamma$, where $\gamma=\alpha\beta^{q+1}$. Since $n$ and $k$ are odd, it follows that $q+1$ and $q^n-1$ are coprime and $\gamma$ ranges over all values of $\F_{q^n}^\times$ as $\beta$ does. Thus the result holds.
\end{proof}

We now prove the main theorem.

\begin{proof}[Proof of Theorem~\ref{thm:main}]
 Lemmas~\ref{lem:Ucontained} and~\ref{lem:UwContained} show that $\B_\alpha$ is a spread for each $\alpha\in\F_{q^n}^\times$. By Lemma~\ref{lem:qMinus1fold}, we have that $\P$ is a $(q-1)$-fold packing. Finally, Lemma~\ref{lem:transitive} shows that $\P$ is transitive.
\end{proof}

%\bibliographystyle{plain}  
%\bibliography{ref}

\begin{thebibliography}{1}

\bibitem{baker1976partitioning}
R.D. Baker.
\newblock Partitioning the planes of {$\rm A$}{$\rm G$}$_{2m}(2)$ into
  2-designs.
\newblock {\em Discrete Mathematics}, 15(3):205--211, 1976.

\bibitem{baker1983preparata}
R.D. Baker, J.H. Van~Lint, and R.M. Wilson.
\newblock On the {P}reparata and {G}oethals codes.
\newblock {\em IEEE Transactions on Information Theory}, 29(3):342--345, 1983.

\bibitem{beutelspacher1974parallelisms}
A.~Beutelspacher.
\newblock On parallelisms in finite projective spaces.
\newblock {\em Geometriae Dedicata}, 3(1):35--40, 1974.

\bibitem{brouwer2012strongly}
A.E. Brouwer and H.~Van~Maldeghem.
\newblock {\em Strongly regular graphs}.
\newblock 2021.

\bibitem{de1995family}
D.~de~Caen, R.~Mathon, and G.E. Moorhouse.
\newblock A family of antipodal distance-regular graphs related to the
  classical {P}reparata codes.
\newblock {\em Journal of Algebraic Combinatorics}, 4:317--327, 1995.

\bibitem{denniston1972some}
R.H.F. Denniston.
\newblock Some packings of projective spaces.
\newblock {\em Atti della Accademia Nazionale dei Lincei. Classe di Scienze
  Fisiche, Matematiche e Naturali. Rendiconti}, 52(1):36--40, 1972.

\bibitem{johnson2010combinatorics}
N.~Johnson.
\newblock {\em Combinatorics of spreads and parallelisms}.
\newblock CRC Press, 2010.

\bibitem{lidl1997finite}
R.~Lidl and H.~Niederreiter.
\newblock {\em Finite fields}, volume~20 of {\em Encyclopedia of Mathematics
  and its Applications}.
\newblock Cambridge University Press, 1997.

\bibitem{penttila1998regular}
T.~Penttila and B.~Williams.
\newblock Regular packings of {$\rm P$}{$\rm G$}$(3, q)$.
\newblock {\em European Journal of Combinatorics}, 19(6):713--720, 1998.

\end{thebibliography}

\end{document}